\DeclareMathOperator*{\colim}{colim} 
\DeclareMathOperator*{\Tot}{Tot}
\begin{document}

\pretolerance=3000 

\bibliographystyle{./hsiam} 

\newcommand{\cat}[1]{\mathscr{#1}}
\newcommand{\ob}{\textrm{Ob}}
\newcommand{\mor}{\textrm{Mor}}
\newcommand{\id}{\mathbf 1} 

\newcommand{\sSet}{\mathbf{sSet}}
\newcommand{\Alg}{\mathbf{Alg}}
\newcommand{\sPr}{\mathbf{sPr}}
\newcommand{\dgCat}{\mathbf{dgCat}} 
\newcommand{\dgAlg}{\mathbf{dgAlg}} 
\newcommand{\Vect}{\mathbf{Vect}} 
\newcommand{\Ch}{\mathbf{Ch}}
\newcommand{\Chd}{{\mathbf{Ch}_{dg}}} 
\newcommand{\Chdp}{{\mathbf{Ch}_{pe}}} 
\newcommand{\Chp}{{\mathbf{Ch}_{pe}}} 
\newcommand{\uChp}{{\underline {\mathbf{Ch}}{}_{pe}}} 
\newcommand{\skMod}{\mathbf{skMod}} 
\newcommand{\Cat}{\mathbf{Cat}} 
\newcommand{\sCat}{\mathbf{sCat}}
\newcommand{\sModCat}{\mathbf{sModCat}}

\newcommand{\Hom}{\mbox{Hom}}
\newcommand{\uHom}{\mbox{\underline{Hom}}}
\newcommand{\Aut}{\mbox{Aut}}
\newcommand{\Out}{\mbox{Out}}
\newcommand{\End}{\mbox{End}}
\newcommand{\Map}{\mbox{Map}}
\newcommand{\map}{\mbox{map}}
\newcommand{\Tor}{\mbox{Tor}}
\newcommand{\Ext}{\mbox{Ext}}

\newcommand{\mods}{\textrm{-Mod}}
\newcommand{\Kos}{\mbox{Kos}} 
\newcommand{\Spec}{\mbox{Spec}}

\newcommand{\set}[1]{\mathbb{#1}}
\newcommand{\Q}{\mathbb{Q}}
\newcommand{\C}{\mathbb{C}}
\newcommand{\Z}{\mathbb{Z}}
\newcommand{\R}{\mathbb{R}}

\newcommand{\De}{\Delta}
\newcommand{\Ga}{\Gamma}
\newcommand{\Om}{\Omega}
\newcommand{\ep}{\epsilon}
\newcommand{\de}{\delta}
\newcommand{\La}{\Lambda}
\newcommand{\la}{\lambda}
\newcommand{\al}{\alpha}
\newcommand{\om}{\omega}

\newcommand{\oo}{\infty}

\newcommand{\op}{^{\textrm{op}}}
\newcommand{\inv}{^{-1}} 
\newcommand{\nneg}{\tau_{\geq 0}} 

\newcommand{\Bold}{\boldsymbol}
\newcommand{\IF}{\textrm{if }}
\newcommand{\Res}{\textrm{Res}}
\newcommand{\comment}[1]{}

\theoremstyle{plain}
\newtheorem{theorem}{Theorem}[section]
\newtheorem{lemma}[theorem]{Lemma}
\newtheorem{proposition}[theorem]{Proposition}

\theoremstyle{definition}
\newtheorem{notation}[theorem]{Notation}
\newtheorem{definition}[theorem]{Definition}
 
\theoremstyle{remark}
\newtheorem{remark}[theorem]{Remark}
\newtheorem{example}[theorem]{example}


\title[Properness and simplical resolutions for $\dgCat$]{Properness and simplicial resolutions \\ for the model category $\dgCat$}
\author{Julian V. S. Holstein}
\address{Julian Holstein \\ Universit\"at Hamburg \\ Fachbereich Mathematik \\ Bundesstra{\ss}e 55 \\ 20146 Hamburg}
\email{julian.holstein1@uni-hamburg.de}

\begin{abstract}
We give an elementary proof that the model category of dg-categories over a ring of flat dimension 0 is left proper and we provide a construction of simplicial resolutions in dg-categories, given by categories of Maurer-Cartan elements.
\end{abstract}

\maketitle
\section{Introduction}
We provide proofs of the following properties of the model category $\dgCat_{k}$ of dg-categories (with the Morita or Dwyer-Kan model structure) over a ring $k$. 
\begin{itemize}
\item When $k$ has flat dimension 0, the category $\dgCat_k$ is left proper. 
\item Natural simplicial resolutions in $\dgCat$ are given by dg-categories of Maurer-Cartan elements.
\end{itemize}

Left properness is essential to show the existence of Bousfield localizations of dg-categories. (Under stronger assumptions on $k$ left properness also follows from \cite{Muro12}.)
We also remark that $\dgCat$ is cellular and there is a Quillen equivalent combinatorial subcategory (without assumptions on the existence of large cardinals).

Simplicial resolutions allow for constructions of explicit mapping spaces and simplicial actions. These play a crucial role in categorifying cohomology to Morita cohomology, see \cite{Holstein1}. 
We construct simplicial resolutions by an explicit if somewhat lengthy computation motivated by the \v Cech globalization in \cite{Simpson05}.
Note that the explicit combinatorics of this construction have appeared in other contexts: 
If $K$ is the nerve of a category this is the data of an $A_{\oo}$-functor, see for example \cite{Igusa02}.
If $K$ is any simplicial set one recovers the $\oo$-local systems defined in \cite{Block09}.
We feel that the interpretation here as the cotensor action of simplicial sets on $\dgCat$, computed via simplicial resolutions, provides a satisfying conceptual viewpoint.

These results are taken from the author's thesis. 
Thanks are due to Ian Grojnowski and Jon Pridham for helpful discussions as well as to Zhaoting Wei and the anonymous referee for useful questions, corrections and suggestions. 

Finally, the author is grateful to Daria Poliakova for pointing out some gaps in the proofs. Some of those gaps are closed in \cite{Arkhipov18}.
We indicate the problems in footnotes and explain how to correct them in a new appendix. 
All results remain correct.
\subsection{Conventions}
We assume the reader is familiar with the theory of dg-categories. Basic references are \cite{Keller06} and \cite{Toen07a}. 

Recall in particular that there are two model structures on $\dgCat_{k}$, the category of differential graded categories over a ring $k$. These are the Dwyer-Kan model structure, constructed in \cite{Tabuada04}, and the Morita model structure \cite{Tabuada05} which is its left Bousfield localization, cf. \cite{Tabuada07}. We will often not distinguish between them as our results will apply to both model categories.\footnote{While the results are indeed true for both model structures, a careful distinction is needed for correct proofs, see the appendix.} 

We use homological grading conventions, all differentials decrease the degree. The degree is indicated by a subscript or the inverse of a superscript, $C_{i} = C^{-i}$.

\section{Dg-categories over a ring of flat dimension 0 form a left proper, cellular, combinatorial model category}\label{sect-furtherdg}
\subsection{Left properness}
In this section we will show that the model category of dg-categories over a field $k$ is left proper. Recall that a model category is \emph{left proper} if any pushout of a weak equivalence along a cofibration is again a weak equivalence.

\begin{remark} Recall that $\dgCat$ with the Dwyer-Kan model structure is right proper since every object is fibrant, and it is not right proper with the Morita model structure, as is shown explicitly by Example 4.10 in \cite{Tabuada10a}.
\end{remark}

Before proceeding to the proof we mention two closely related results from the literature. Dwyer and Kan prove left properness for simplicial categories on a fixed set of objects in \cite{Dwyer80}.

If we strengthen our assumption and let $k$ have global dimension 0, then it follows from Corollary 1.3 in \cite{Muro12} that $\dgCat_{k}$ is left proper.
To see this, note that in this case all chain complexes over $k$ are cofibrant in the projective model structure, so the results in \cite{Muro12} apply. Indeed, any chain complex is a direct limit of its canonical filtration by bounded below subcomplexes. If all $k$-modules are projective this is a special direct limit in the sense of \cite{Spaltenstein88}, hence the limit is a K-projective object and hence cofibrant.

\begin{theorem}\label{thm-dgcat-leftproper}
If $k$ has flat dimension 0 the model category $\dgCat_{k}$ is left proper.
\end{theorem}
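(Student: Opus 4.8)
The plan is to reduce to the Dwyer--Kan model structure and then to pushouts along the generating cofibrations, the point being that flat dimension $0$ makes $\otimes_k$ exact. First I would dispose of the Morita structure: it is the left Bousfield localization of the Dwyer--Kan structure, and left Bousfield localizations of left proper model categories are again left proper, so it suffices to treat the Dwyer--Kan structure, whose weak equivalences are the quasi-equivalences. Next, writing a cofibration $i\colon \cat A\to\cat C$ via the small object argument as a retract, rel $\cat A$, of a relative cell complex $\cat A=\cat X_0\to\cat X_1\to\cdots\to\cat X_\lambda=\cat C'$ (a transfinite composite of pushouts of generating cofibrations), and using that weak equivalences are closed under retracts, it is enough to show that for any quasi-equivalence $f\colon\cat A\to\cat B$ the map $\cat C'\to\cat C'\sqcup_{\cat A}\cat B$ is a quasi-equivalence. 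Setting $\cat Y_\beta=\cat X_\beta\sqcup_{\cat A}\cat B$, I would prove by transfinite induction that each $g_\beta\colon\cat X_\beta\to\cat Y_\beta$ is a quasi-equivalence. At limit ordinals the categories are filtered colimits along the cofibrations $\cat X_\beta\to\cat X_{\beta+1}$; since homology commutes with filtered colimits and no isomorphism classes of objects are lost, $g_\lambda$ is again a quasi-equivalence. Note that cofibrancy of $\cat A$ or $\cat B$ is never used.

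Everything thus reduces to the successor step: given a quasi-equivalence $g_\beta$ and a generating cofibration $\cat S\to\cat T$ attached along $\cat S\to\cat X_\beta$, show the induced map $\cat X_\beta\sqcup_{\cat S}\cat T\to\cat Y_\beta\sqcup_{\cat S}\cat T$ is a quasi-equivalence. For the object-adjoining generator $\emptyset\to\underline k$ (with $\underline k$ the dg-category with one object and endomorphism ring $k$) this is immediate: both pushouts are disjoint unions with $\underline k$ and $g_{\beta+1}=g_\beta\sqcup\id_{\underline k}$, a quasi-equivalence.

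The heart of the matter is a morphism-adjoining generator, which freely adjoins a generator $h\colon x\to y$ with prescribed differential $dh$ a cycle already present. Here the objects are unchanged, while each hom-complex is the direct sum of ``words'' in $h$:
\[
(\cat X_\beta\sqcup_{\cat S}\cat T)(a,b)=\cat X_\beta(a,b)\oplus\bigoplus_{p\ge 1}\cat X_\beta(y,b)\otimes_k(kh)\otimes_k\cat X_\beta(y,x)\otimes_k\cdots\otimes_k(kh)\otimes_k\cat X_\beta(a,x),
\]
the $p$-th summand carrying $p$ tensor factors $kh$. The differential splits as an internal part (the Leibniz differential on the tensor factors, preserving the number of $h$'s) plus the part coming from $dh$, which lowers it. Filtering by the number of occurrences of $h$ therefore yields a filtration by subcomplexes whose associated graded is the displayed direct sum with the internal differential only, i.e. a direct sum of finite tensor products over $k$ of hom-complexes of $\cat X_\beta$ and $\cat Y_\beta$. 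This is exactly where flat dimension $0$ enters: every $k$-module is flat, so $\otimes_k$ is exact and preserves quasi-isomorphisms, whence $g_\beta$ induces a quasi-isomorphism on each associated graded piece. By induction on the filtration degree together with the five lemma applied to the long exact sequences of the pairs, the map is a quasi-isomorphism on every finite stage, and then on the full hom-complex since homology commutes with the filtered colimit. Essential surjectivity on $H^0$ is inherited from $g_\beta$ as no objects are added, so $g_{\beta+1}$ is a quasi-equivalence.

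I expect the main obstacle to be precisely this morphism-adjoining step: identifying the pushout explicitly as words in $h$ with the correct splitting of the differential, and organizing the homological bookkeeping (the word-length filtration and its colimit) so that exactness of $\otimes_k$ can be brought to bear. The reduction to the Dwyer--Kan structure, the retract and transfinite-composition formalities, and the limit and object-adjoining cases are all routine by comparison.
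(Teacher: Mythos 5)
Your proposal is correct and takes essentially the same route as the paper: the same reduction to the Dwyer--Kan structure and to generating cofibrations via retracts and filtered colimits, the same explicit description of the pushout hom-complexes as words in the adjoined morphism (the paper's Equation (1)), and the same word-length filtration with flat dimension $0$ making $\otimes_k$ preserve quasi-isomorphisms --- your five-lemma induction on filtration stages followed by passage to the colimit is just the paper's convergent spectral-sequence argument unrolled. The only cosmetic difference is that the paper first factors the quasi-equivalence as an identity-on-objects functor followed by one that is an isomorphism on hom-complexes and proves two separate lemmas, whereas you handle it in a single step, which works equally well.
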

\begin{proof}
Left Bousfield localization preserves left properness, see Proposition 3.4.4 of \cite{Hirschhorn03},
so it is enough to show $\dgCat$ with the Dwyer-Kan model structure is left proper.

The main work is in showing that pushout along the generating cofibrations preserves quasi-equivalences.

To see this suffices note first that transfinite compositions are just filtered colimits, 
and filtered colimits preserve quasi-equivalences as follows:
A filtered colimit of categories can be computed set-theoretically on objects and morphisms.
Now filtered colimits preserve weak equivalences of simplicial sets 
and hence of mapping spaces. They also preserve the homotopy category since a filtered colimit of equivalences of categories is an equivalence of categories and taking the homotopy category commutes with filtered colimits. 
Second, if pushout along some map preserves weak equivalences then so does pushout along a retract by functoriality of colimits. Since all cofibrations are retracts of transfinite compositions of generating cofibrations, it does indeed suffice to check generating cofibrations. 

Recall the generating cofibrations of $\dgCat$ \cite{Tabuada10a}. 
We write $k$ for the dg-category with one object with endomorphisms $k$ concentrated in degree 0. Also let $\cat S(n-1)$ have two objects $a$ and $b$ and $\End(a) \cong \End(b) \cong k[0]$ while $\Hom(a, b) = k.g$ with $g$ in degree $n-1$ and $\Hom(b,a) = 0$. Finally let $\cat D(n)$ be obtained by $\cat S(n-1)$ by adding a generating morphism $f$ of degree $n$ to $\cat S(n-1)$ with $df = g$.
Then the generating cofibrations of $\dgCat$ are given by $\emptyset \to k$ and by
$\cat S(n-1) \to \cat D(n)$ for all $n \in \set Z$.

It is clear that pushout along $\emptyset \to k$ preserves quasi-equivalences.

So consider the generating cofibration $\cat S(n-1) \to \cat D(n)$ with a map $j\colon \cat S(n-1) \to \cat C$ 
and a quasi-equivalence $F\colon \cat C \to \cat E$. 
In forming the pushforward we adjoin a new map $f$ with $df = j(g)$. We call the resulting category $\cat C'$. Then let $\cat E'$ be the pushout of $\cat S(n-1) \to \cat D(n)$ along $F \circ j$.

The pushout along $j$ has the same objects as $\cat C$.
The morphism space is obtained by collecting maps from $C$ to $D$, graded by how often they factor through $f \colon j(a) \to j(b)$. Write  $\cat C(A, B)$ etc.\ for the enriched hom-spaces $\uHom_{\cat C}(A, B)$ etc.
Then the hom-spaces in $\cat C'$ are given as follows:
\begin{equation}\label{eqn-barhom}
 \cat C'(C, D) = \Tot{\!}^{\oplus} \ \big( \cat C(C, D) \oplus \big(\cat C(j(b), D) \otimes k.f \otimes T \otimes \cat C(C, j(a)) \big) \big)
 \end{equation}
Here $T = \sum_{n \geq 0} (\cat C(j(b), j(a)) \otimes k.f)^{\otimes n}$ and we introduce a horizontal degree $n$ with $\cat C(C, D)$ in degree $-1$.
The right hand side has a vertical differential $d_{v}$ given by the internal differential and a horizontal differential $d_{h}$ given by $f \mapsto j(g) \in \Hom(j(b), j(a))$ composed with the necessary compositions. 

If the functor $F$ is not the identity on objects from $\cat C$ to $\cat E$ we factor 
\[
F = Q \circ H\colon \cat C \to \cat D \to \cat E
\]
where $\cat D$ has as objects the objects of $\cat C$ but $\Hom_{\cat D}(A, B) = \Hom_{\cat E}(FA, FB)$. Then $H$ is identity on objects and $Q$ is an isomorphism on hom-spaces. We form the pushforward and obtain the factorization $F' = Q' \circ H'$ through $\cat D'$. 

So it suffices to prove the following two lemmas. \end{proof}

\begin{lemma} The functor $Q'$ defined as above is a quasi-equivalence if $Q$ is.
\end{lemma}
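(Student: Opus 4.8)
The plan is to verify the two conditions defining a quasi-equivalence for $Q'$: that it is quasi-fully-faithful (a quasi-isomorphism on each hom-complex) and essentially surjective on homotopy categories. Two features of the setup do most of the work. First, by construction $Q$ is an \emph{isomorphism} (not merely a quasi-isomorphism) on every hom-space. Second, forming the pushout of the generating cofibration $\cat S(n-1) \to \cat D(n)$ along $H j$ (respectively along $F j = Q H j$) does not alter the set of objects, so $\cat D'$ has the same objects as $\cat D$, $\cat E'$ the same objects as $\cat E$, and $Q'$ agrees with $Q$ on objects.

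For fully-faithfulness I would read off both $\cat D'(C,D)$ and $\cat E'(Q'C, Q'D)$ from the explicit formula \eqref{eqn-barhom}. Each is a $\Tot^{\oplus}$ of a direct sum of iterated tensor products of hom-spaces of $\cat D$, respectively of $\cat E$, with the factors $k.f$ left untouched. Since $Q$ is an isomorphism on each of the hom-spaces $\cat D(C,D)$, $\cat D(j(b), D)$, $\cat D(C, j(a))$ and $\cat D(j(b), j(a))$ appearing as tensor factors, and since tensoring over $k$ and direct sums preserve isomorphisms, $Q'$ is a degreewise isomorphism of graded modules. It then remains to check compatibility with the two differentials: $d_v$ is the internal differential and is respected because $Q$ is a dg-functor, while $d_h$ is built from the assignment $f \mapsto j(g)$ followed by the necessary compositions, and is respected because $Q$ preserves composition and carries the image of $g$ in $\cat D$ to the image of $g$ in $\cat E$. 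Hence $Q'$ is in fact an isomorphism of hom-complexes, and a fortiori quasi-fully-faithful.

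For essential surjectivity I would use the commuting square of dg-functors relating $Q$, $Q'$ and the two pushout structure maps $\cat D \to \cat D'$ and $\cat E \to \cat E'$, both of which are the identity on objects. Given an object $X$ of $\cat E'$, which is just an object of $\cat E$, essential surjectivity of the quasi-equivalence $Q$ provides an object $A$ of $\cat D$ with $X \cong QA$ in $H^0(\cat E)$. The structure functor $\cat E \to \cat E'$ carries this isomorphism to an isomorphism $X \cong Q'A$ in $H^0(\cat E')$, using commutativity of the square together with the fact that the structure maps fix objects. Thus $Q'$ is essentially surjective, and combined with the previous paragraph this shows $Q'$ is a quasi-equivalence.

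I expect the only point requiring genuine care to be the differential bookkeeping in the fully-faithful step, namely confirming that $Q'$ intertwines $d_h$ as well as $d_v$. However, because the hypothesis on $Q$ is the strong one of being an isomorphism on hom-spaces, this reduces to the naturality of the formula \eqref{eqn-barhom} in its input category and carries no real analytic content; the substantive work of the overall argument lies instead in the companion lemma treating $H'$, where $H$ is only a quasi-isomorphism on hom-spaces.
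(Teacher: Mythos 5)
Your proof is correct, but for quasi-fully-faithfulness it takes a genuinely different and more elementary route than the paper. The paper filters the right-hand side of Equation \eqref{eqn-barhom} by columns and runs the resulting convergent spectral sequence: $Q'$ acts as $Q$ on every tensor factor, hence induces an isomorphism on the $E^{1}$-page, hence on $E^{\oo}$, giving quasi-isomorphisms on hom-complexes. You instead exploit the full strength of the hypothesis that $Q$ is an \emph{isomorphism} (not merely a quasi-isomorphism) on hom-spaces to conclude that $Q'$ is degreewise bijective and a chain map for the total differential $d_{v}+d_{h}$, hence an isomorphism of hom-complexes outright --- a strictly stronger conclusion obtained with no homological machinery. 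This works because, as your formula implicitly uses (and the paper flags explicitly in its closing remark), $\cat S(n-1)$ maps to $\cat E$ through $\cat D$, so every hom-space occurring in the description of $\cat E'(Q'C, Q'D)$ is the $Q$-image of the corresponding hom-space of $\cat D$; your checks that $Q'$ intertwines $d_{v}$ (dg-functoriality) and $d_{h}$ (the substitution $f \mapsto$ image of $g$, compatible since $F = Q \circ H$) are exactly the required naturality of \eqref{eqn-barhom}. What the paper's heavier argument buys is uniformity: the identical column-filtration spectral sequence is reused verbatim in the companion lemma for $H'$, where $H$ induces only quasi-isomorphisms on hom-spaces and flatness over $k$ is needed to control the tensor products --- which matches your closing observation that the substantive work of the overall theorem lives in that lemma. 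Your treatment of quasi-essential surjectivity coincides with the paper's: the pushout structure maps $\cat D \to \cat D'$ and $\cat E \to \cat E'$ fix objects, and a dg-functor carries homotopy equivalences to homotopy equivalences, so the isomorphism $X \cong QA$ in $H^{0}(\cat E)$ persists in $H^{0}(\cat E')$.
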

\begin{proof}
Note that $Q'$ is quasi-essentially surjective if $Q$ is since both $\cat D \to \cat D'$ and $\cat E \to \cat E'$ are essentially surjective as pushout along $j$ does not change the set of objects.

Next we use a spectral sequence to compute the hom-spaces in $\cat D'$ and $\cat E'$. To construct the spectral sequence we filter the right-hand side of Equation \ref{eqn-barhom} (with $\cat D$ respectively $\cat E$ in place of $\cat C$) by columns, i.e.\ by $n$. Let $(V, d_{h}+d_{v})$ denote any Hom space in $\cat D'$ or $\cat E'$.
The filtration is bounded below and exhaustive for the direct sum total complex $V$ 
and hence the associated spectral sequence 
\[
E^{1}_{pq} = H_{p+q}(\textrm{Gr}_{p} V) \Rightarrow E^{\oo}_{pq} = H_{p+q}(V)
\]
converges. 
Now $\textrm{Gr}(V) = (V, d_{v})$ and the map induced by $Q'$ is given by $Q$ on all the hom-spaces making up the right-hand side of Equation \ref{eqn-barhom}.
Since $Q$ induces isomorphisms on hom-spaces, it induces isomorphisms on their direct sums and tensor products and thus $Q'$ induces an isomorphisms on the $E^{1}$-page of the spectral sequences computing hom-spaces in $\cat D'$ and $\cat E'$. Hence $Q'$ induces an isomorphism on the $E^{\oo}$-page. For any pair of objects $C,D$ in $\cat D'$ this gives an isomorphism $\cat D'(C, D) \cong \cat E'(QC, QD)$, so $Q'$ induces quasi-isomorphisms on hom-spaces. 

Note that since $\cat S(n-1)$ maps to $\cat E$ via $\cat D$ all the hom-spaces involved in computing $\cat E'(QC, QD)$ are indeed images of hom-spaces in $\cat D$.
\end{proof}
\begin{lemma} The functor $H'$ defined as above is a quasi-equivalence if $H$ is.
\end{lemma}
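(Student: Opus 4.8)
The strategy mirrors the proof of the previous lemma; the essential new point is the use of the hypothesis on $k$. First I would record that $H'$ is the identity on objects: forming the pushout along $j$ leaves the set of objects unchanged, and $H$ is already the identity on objects, so the same is true of $H'$. In particular $H'$ is quasi-essentially surjective, and it remains to prove that $H'$ induces quasi-isomorphisms on all hom-spaces.

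Because $H$ is a quasi-equivalence which is bijective on objects, it is quasi-fully faithful; that is, every map $H\colon \cat C(A,B)\to\cat D(A,B)$ is a quasi-isomorphism. I would then run exactly the spectral sequence from the previous lemma: filtering the right-hand side of Equation~\ref{eqn-barhom} by the horizontal degree $n$ (with $\cat C$, respectively $\cat D$, in place of the ambient category) yields a bounded-below, exhaustive filtration of the direct-sum total complex, hence a convergent spectral sequence $E^{1}_{pq}=H_{p+q}(\textrm{Gr}_{p}V)\Rightarrow H_{p+q}(V)$ whose associated graded carries only the vertical differential. Since $H'$ respects the filtration it induces a map of spectral sequences, and on each graded piece this map is a direct sum of tensor products of copies of $H$ together with the identity on the one-dimensional factors $k.f$.

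The crux --- and the only place the flat dimension of $k$ intervenes --- is to promote the quasi-isomorphisms $H$ on individual hom-spaces to quasi-isomorphisms on the tensor products occurring in Equation~\ref{eqn-barhom}. Here I would invoke flat dimension $0$: every $k$-module is flat, so all $\Tor$-terms vanish and the K\"unneth isomorphism $H_{*}(A\otimes B)\cong H_{*}(A)\otimes H_{*}(B)$ holds naturally, with no boundedness hypothesis on the (in general unbounded) hom-complexes. It follows that a finite tensor product of the quasi-isomorphisms $H$ is again a quasi-isomorphism, and since homology commutes with direct sums the sum defining $T$ is harmless. Consequently $H'$ induces an isomorphism on the $E^{1}$-page.

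By convergence this forces an isomorphism on the $E^{\oo}$-page, and hence a quasi-isomorphism $\cat C'(C,D)\to\cat D'(C,D)$ for each pair of objects $C,D$; together with essential surjectivity, this shows $H'$ is a quasi-equivalence. I expect the tensor-product step to be the only real obstacle: in the previous lemma $Q$ was an \emph{isomorphism} on hom-spaces and so restricted to an isomorphism on each graded piece automatically, whereas here $H$ is merely a quasi-isomorphism, and it is precisely the vanishing of $\Tor$ over a ring of flat dimension $0$ that keeps quasi-isomorphisms quasi-isomorphisms after tensoring --- exactly the hypothesis of Theorem~\ref{thm-dgcat-leftproper}.
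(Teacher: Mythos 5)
Your proof is correct and follows essentially the same route as the paper's: quasi-essential surjectivity from the fact that the pushout does not change objects, then the column filtration of Equation~\ref{eqn-barhom} and its convergent spectral sequence, with flat dimension $0$ invoked exactly where the paper invokes it, to ensure the tensor products in the $E^{1}$-page preserve quasi-isomorphisms. Your K\"unneth elaboration just spells out the paper's one-line justification that hom-spaces are flat over $k$, so nothing further is needed.
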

\begin{proof}
Note that $H'$ is quasi-essentially surjective if $H$ is for the same reason that $Q'$ is. 

To consider the effect of $H'$ on mapping spaces we follow the same argument as in the previous lemma.
Now $H$ only induces weak equivalences on hom-spaces, but we know all hom-spaces are flat over $k$ by assumption. Hence the tensor product in Equation \ref{eqn-barhom} preserves quasi-isomorphisms. So we have a quasi-isomorphism between the $E^{1}$ pages of the spectral sequences and hence between $E^{\oo}$ pages and $H'$ induces quasi-isomorphisms on hom-spaces. 
\end{proof}

\begin{remark}\label{rk-nonflat}
If $k$ does not have flat dimension 0 then the conclusion is false. We can adapt Example 2.7 in \cite{Rezk02} to the case of dg-categories. Let $k$ have positive flat dimension, then there exists a pair of $k$-modules $M,N$ with $\Tor_{1}^{k}(M,N) \neq 0$. We will consider the $k$-algebra $A = k \oplus M \oplus N$ with trivial product $M \oplus N$. Then $\Tor_{1}^{k}(A,A) \neq 0$. View $A$ as a dg-algebra concentrated in degree 0 and take a free resolution $B$ of $A$. Next consider both $A$ and $B$ as dg-categories with one object. They are quasi-equivalent. Now attach a free generator to $A$ and to $B$ by pushout along the generating cofibration $\cat S(-1) \to \cat D(0)$. We then have $A\langle x\rangle \simeq \bigoplus_{n\geq 1} A^{\otimes n}$ and $B\langle y \rangle \simeq \bigoplus_{n\geq 1} B^{n}$ (note that the tensor product here is the underived tensor product over $k$). But since $H_{1}(B\otimes B) = \Tor_{1}^{k}(A, A) \neq 0$ and $A\langle x \rangle$ is concentrated in degree 0 the two pushouts are not quasi-equivalent, and $\dgCat_{k}$ is not left proper.

Hence the model category of dg-categories is left proper if and only if all dg-categories are $k$-flat, i.e.\ if and only if $k$ has flat dimension 0, equivalently if $k$ is von Neumann regular.

In \cite{Rezk02} the existence of a proper model for simplicial $k$-algebras is proven. A similar result for dg-categories is beyond the scope of this work.
\end{remark}

\subsection{Cellularity and combinatoriality}
One of the main uses of properness is in constructing left Bousfield localizations. The only additional assumption needed is that the model category is either cellular or combinatorial. We now show that both are satisfied for $\dgCat$.

\begin{proposition} The two model category structures on $\dgCat$ are cellular. 
\end{proposition}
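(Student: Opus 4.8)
The plan is to verify the three defining conditions of a cellular model category (Definition 12.1.1 of \cite{Hirschhorn03}): that the domains and codomains of the generating cofibrations are compact, that the domains of the generating trivial cofibrations are small relative to the generating cofibrations, and that every cofibration is an effective monomorphism (Definition 10.9.1 of \cite{Hirschhorn03}). Since the Morita structure is the left Bousfield localization of the Dwyer--Kan structure, the two share the same cofibrations and the same set $I$ of generating cofibrations $\{\emptyset \to k\} \cup \{\cat S(n-1)\to\cat D(n)\}_{n\in\set Z}$; hence the conditions on $I$ and on cofibrations need only be checked once. Only the generating trivial cofibrations differ between the two structures, so for the smallness condition I would observe that the domains of both sets of generating trivial cofibrations are again finite dg-categories and argue for them exactly as below.

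For compactness and smallness I would use that the domains and codomains of $I$ --- the dg-categories $\emptyset$, $k$, $\cat S(n-1)$ and $\cat D(n)$ --- are finite, in the sense of having a finite set of objects and hom-complexes that are finitely generated and free over $k$. A dg-functor out of such a $W$ into a (presented) relative $I$-cell complex $\colim_\beta Y_\beta$ is determined by the images of the finitely many objects of $W$ and of its finitely many generating morphisms. Each object already appears at some stage $Y_\beta$, and each generating morphism is an element of a hom-complex of the colimit; since these hom-complexes are assembled by the iterated direct-sum-and-tensor (bar-type) construction of Equation \ref{eqn-barhom}, any such element is a finite expression involving only finitely many of the attached cells. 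Thus every map out of $W$ factors through a subcomplex generated by finitely many cells, so $W$ is compact relative to $I$, and a fortiori the domains of the generating trivial cofibrations are small relative to $I$.

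The substance of the argument, and the step I expect to be the main obstacle, is condition (3): that every cofibration $A \to B$ is the equalizer of its cokernel pair $B \rightrightarrows B\sqcup_A B$. First I would record two structural facts about cofibrations, each true for the generators and preserved under pushout, transfinite composition and retract: a cofibration is injective on objects (using that $\ob\colon\dgCat\to\mathbf{Set}$ is a left adjoint, so that retracts of the injective-on-objects cell complexes remain injective), and it is a degreewise split monomorphism on each hom-complex (as is visible from the summand $\cat C(C,D)$ in Equation \ref{eqn-barhom}). Using the explicit description of equalizers in $\dgCat$ --- objects where the two maps agree, hom-complexes the termwise equalizers --- it then remains to compute the cokernel pair. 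On objects $\ob(B\sqcup_A B)=\ob B\sqcup_{\ob A}\ob B$ since $\ob$ preserves colimits, and as $\ob A\to\ob B$ is injective the two inclusions agree exactly on the image of $\ob A$. On hom-complexes $B\sqcup_A B$ is the amalgamated free product, whose morphisms are spanned by alternating words in the morphisms of the two copies of $B$ glued over $A$; the splitness on homs makes this bar-type decomposition clean, and the two structural inclusions $B\rightrightarrows B\sqcup_A B$ agree on a morphism of $B$ precisely when it lies in the image of $A$. Hence the equalizer is exactly $A\to B$, establishing the effective monomorphism condition. The genuine work lies in this last hom-complex computation of the amalgamated free product and the verification that the equalizer recovers $A$ and nothing more.
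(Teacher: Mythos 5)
Your proposal is essentially correct and, on the three conditions themselves, runs along the same lines as the paper's proof: finiteness of the object sets and of the generating morphisms of the (co)domains of $I$ gives compactness (maps out of them factor through small subcomplexes), the same finiteness gives smallness of the $J$-domains, and the effective-monomorphism condition is reduced to computing the cokernel pair of a free attachment of cells, whose equalizer recovers exactly the original hom-spaces. Your supporting observations --- that cofibrations are injective on objects (via the left adjoint $\ob$), that they are degreewise split monomorphisms on hom-complexes by Equation \ref{eqn-barhom}, and that the effective-monomorphism property passes to retracts --- if anything make your treatment of condition (3) more complete than the paper's, which verifies it only for relative $I$-cell complexes and leaves the amalgamated-free-product computation equally terse ("these are precisely the hom-spaces of $\cat C$").

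The one genuine weak point is your handling of the Morita structure. The paper never looks at the Morita generating trivial cofibrations at all: it invokes Theorem 4.1.1 of \cite{Hirschhorn03}, that left Bousfield localization preserves cellularity, and then checks everything only for the Dwyer--Kan structure. Your parenthetical claim that the domains of the Morita generating trivial cofibrations are "again finite dg-categories" is unjustified and most likely false: Morita fibrancy involves, among other things, splitting homotopy idempotents, and a coherent homotopy idempotent cannot be encoded by finitely many generating morphisms; similarly, if one produces the localized set $J$ by Hirschhorn's localization machinery, its members are horns on cell complexes of bounded but large size. Fortunately the claim is also unnecessary: condition (2) asks only for smallness relative to $I$, not compactness, and smallness tolerates arbitrary regular cardinals --- any small dg-category presented by a set of objects and generating morphisms is $\kappa$-small relative to $I$ for $\kappa$ large enough. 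So your argument is repairable either by this cardinality adjustment or, more cleanly, by adopting the paper's reduction: verify cellularity for the Dwyer--Kan structure and cite preservation of cellularity under left Bousfield localization for the Morita structure.
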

\begin{proof}
Recall that a model category is cellular if it is cofibrantly generated with generating cofibrations $I$ and generating trivial cofibrations $J$ such that the domains and codomains of the elements of $I$ are compact, the domains of the elements of $J$ are small relative to $I$ and the cofibrations are effective monomorphisms. 
See Chapter 10 of \cite{Hirschhorn03} for more details.

Left Bousfield localization preserves being cellular see Theorem 4.1.1 of \cite{Hirschhorn03}.
So it is enough to show $\dgCat$ with the Dwyer-Kan model structure is cellular.

The domains and codomains of elements of $I$ are categories with at most two objects and perfect hom-spaces, so maps from these objects to relative $I$-complexes factor through small subcomplexes. So domains and codomains of $I$ are compact.

Similarly the domains of the elements of $J$ have two objects and perfect hom-spaces. Hence taking maps from a domain of $J$ commutes with filtered colimits. So domains of $J$ are small relative to $I$.

We are left to check that relative $I$-cell complexes, i.e.\ 
transfinite compositions of pushouts of generating cofibrations, are effective monomorphisms, i.e.\ any relative $I$-cell complex $f\colon X \to Y$ is the equalizer of $Y \rightrightarrows Y \amalg_{X}Y$. 
Note that we form the pushout along a generating cofibration by attaching maps freely. If we form $\cat C'$ and $\cat C''$ from $\cat C$ by attaching maps freely then the equalizer will have the same objects and the hom-spaces are given by considering morphisms of the pushout that are in the image of both $\cat C'$ and $\cat C''$. But these are precisely the hom-spaces of $\cat C$. 
\end{proof}

\begin{definition}
Let $\la$ be a regular cardinal.
An object $A$ in a category $\cat D$ is \emph{$\la$-presentable} if it is small with respect to $\la$-filtered colimits, i.e.\ if for every $\la$-filtered colimit $\colim B_i$ the map $\colim \Hom (A, B_i) \to \Hom(A, \colim B_i)$ is an isomorphism. We say $A$ is \emph{presentable} if it is $\la$-presentable for some $\la$. A cocomplete category is \emph{locally presentable} if for some regular cardinal $\la$ it has a set $S$ of $\la$-presentable objects such that every object is a $\la$-directed colimit of objects in $S$.
\end{definition}

\begin{definition}
A model category is \emph{combinatorial} if the underlying category is locally presentable.
\end{definition}

It is known that there exist combinatorial models for all cofibrantly generated model categories under a large cardinal assumption, cf. \cite{Raptis09}. We notice that this assumption is not necessary for $\dgCat$. 

\begin{proposition}\label{propn-dgcat-combinatorial}
The category $\dgCat$ is Quillen equivalent to a combinatorial subcategory.
\end{proposition}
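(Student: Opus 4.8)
The plan is to verify the paper's definition head-on: since a model category is combinatorial exactly when its underlying category is locally presentable, it suffices to exhibit a locally presentable full subcategory of $\dgCat$ carrying the restricted model structure and receiving a Quillen equivalence from $\dgCat$. In fact I would aim for the stronger assertion that $\dgCat$ is \emph{itself} locally presentable, which yields the proposition with the subcategory taken to be $\dgCat$ and the Quillen equivalence the identity. The starting observation is that the enriching category $\Ch$ of unbounded chain complexes of $k$-modules is locally presentable: it is a Grothendieck abelian category, hence locally $\la$-presentable for some regular cardinal $\la$ depending only on the cardinality of $k$, with a set of $\la$-presentable generators and with $\otimes$ preserving $\la$-filtered colimits in each variable.

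First I would fix such a $\la$, enlarging it if necessary so that the domains and codomains of the generating cofibrations and generating trivial cofibrations of $\dgCat$ are $\la$-presentable; this is possible because, as recorded in the cellularity proof, those dg-categories have at most two objects and perfect, hence $\la$-presentable, hom-complexes. I would then take as candidate generators the dg-categories---forming a set up to isomorphism---with fewer than $\la$ objects all of whose hom-complexes are $\la$-presentable in $\Ch$. The crux is to show every dg-category $\cat C$ is a $\la$-filtered colimit of such generators: one writes $\cat C$ as the $\la$-filtered colimit of its full sub-dg-categories spanned by fewer than $\la$ of its objects, and within each such piece resolves every hom-complex as a $\la$-filtered colimit of $\la$-presentable complexes in $\Ch$. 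That this may be carried out compatibly with composition, so that the colimit is genuinely taken in $\dgCat$ and not merely hom-wise, relies on the fact established in the proof of Theorem \ref{thm-dgcat-leftproper} that filtered colimits in $\dgCat$ are computed set-theoretically on objects and morphisms.

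Granting this, $\dgCat$ is locally $\la$-presentable and hence combinatorial; moreover the generating (trivial) cofibrations and their $\la$-presentable domains and codomains already lie among the generators, so the small-object argument and the entire model structure are controlled by this presentable data. (Abstractly, this is the statement that $V$-enriched small categories form a locally presentable category whenever $V$ does, applied to $V = \Ch$.) The step I expect to be the main obstacle is precisely the compatibility in the previous paragraph: producing a single regular cardinal $\la$ that simultaneously serves as the presentability rank of $\Ch$ and bounds the generating cells, and then checking that the two resolutions---of the object set and of the individual hom-complexes---can be interleaved into one honest $\la$-filtered colimit diagram in $\dgCat$ respecting the enriched composition. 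Everything else is the routine bookkeeping of assembling these generators, once the uniform choice of $\la$ is in place.
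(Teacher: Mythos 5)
Your route is genuinely different from the paper's, and it aims at a stronger statement. The paper makes no attempt to prove that $\dgCat$ itself is locally presentable: following Raptis, it takes the set $S$ of domains and codomains of the generating cofibrations and generating trivial cofibrations, passes to the full subcategory of $S$-generated objects (those isomorphic to $\eta_S(X)$, the colimit over the overcategory $\cat S \downarrow A$), and quotes Raptis's theorems that this subcategory carries a model structure Quillen equivalent to the original and is locally presentable as soon as every object of $S$ is presentable --- which is immediate, since those dg-categories have at most two objects and finitely many generating morphisms. Your plan instead asserts local presentability of all of $\dgCat$, which is in fact true (it is the theorem of Kelly and Lack that $V$-categories form a locally presentable category whenever $V$ does, applied to $V = \Ch$), and which renders the proposition trivial with the subcategory taken to be $\dgCat$ itself. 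So your conclusion is correct and strictly stronger than what the paper proves; the trade-off is that the paper's argument is essentially a citation requiring only the trivial check that $S$ consists of presentable objects, whereas yours requires real presentability bookkeeping.

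That said, the step you yourself flag is where your sketch, as written, would break: resolving each hom-complex \emph{separately} as a $\la$-filtered colimit of $\la$-presentable complexes in $\Ch$ gives no canonical way to make the approximating subobjects closed under composition, so the hom-wise diagrams need not assemble into a diagram in $\dgCat$ at all. The standard repair is cardinality-based rather than resolution-based: fix a regular cardinal $\la > |k| + \aleph_0$; then any sub-dg-category of $\cat C$ generated by fewer than $\la$ objects and morphisms has hom-complexes of cardinality less than $\la$ (closure under composition, $k$-linear combinations and the differential does not increase the bound, by regularity), hence $\la$-presentable in $\Ch$, and $\cat C$ is the $\la$-filtered union of these sub-dg-categories --- here the fact you invoke, that filtered colimits in $\dgCat$ are computed set-theoretically on objects and morphisms, does the work. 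Alternatively one avoids the interleaving entirely by noting that $\dgCat$ is the category of algebras for the free-category monad on $\Ch$-graphs, a filtered-colimit-preserving monad on a locally presentable category. With either fix your argument closes, and in fact improves on the proposition; without one of them, the ``compatibly with composition'' clause is an unproved assertion rather than routine bookkeeping.
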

\begin{proof}
This follows immediately from the proof of the main theorem of \cite{Raptis09}.
Let $\cat D$ denote either of the two model structures on $\dgCat$. Let $S$ be the collection of objects that are domains or codomains of the generating cofibrations and generating trivial cofibrations. (See \cite{Tabuada10a} for an explicit description.) Clearly $S$ is a set. 
Let $\cat S$ denote the full subcategory of $\cat D$ with objects $S$. Define $\eta_{S}(X)$ to be the colimit of the forgetful diagram $(s \to A)  \mapsto s$ indexed by the overcategory $\cat S \downarrow A$.
Then an object $A \in \cat D$ is \emph{$S$-generated} if it is isomorphic to $\eta_S(X)$. 

Now by the proof of Theorem 1.1 in \cite{Raptis09} the subcategory of $S$-generated objects of $\cat D$ is a model category $\cat D_S$ which is Quillen equivalent to the original one. Moreover, by Proposition 3.1 of \cite{Raptis09}, $\cat D_S$ is locally presentable if every object in $S$ is presentable. 
But this is clear since the objects in $S$ have finitely many objects and generating morphisms.
\end{proof}

\begin{remark}
Note that Vop\v enka's principle is not needed here since the objects of $S$ are presentable.
\end{remark}

\section{Simplicial resolutions of dg-categories}\label{section-MCglobal} 

In this section we will construct explicit simplicial functorial resolutions $\cat C \mapsto \cat C_{\bullet}$ in $\dgCat$. Again, we can consider either model structure on $\dgCat$.\footnote{The reader should consider the Dwyer-Kan model structure in this section and refer to the appendix for the proof in the Morita case.}

We first recall the basic definitions.
Let $\De$ be the simplex category and consider the constant diagram functor $c\colon \cat M \to \cat M^{\De\op}$. Then a \emph{simplicial resolution} $M_{\bullet}$ for $M \in \cat M$ is a fibrant replacement for $cM$ in the Reedy model structure on $\cat M^{\De\op}$. (For a definition of the Reedy model structure see for example Chapter 15 of \cite{Hirschhorn03}.) The dual notion is a \emph{cosimplicial} \mbox{\emph{resolution} $M^{\bullet}$}.

We recall two applications:

By using simplicial resolutions one can define mapping spaces with values in $Ho(\sSet)$ for every model category, even if it is not a simplicial model category.
If $cB \to \tilde B$ is a simplicial resolution in $\cat M^{\De\op}$ and $QA$ a cofibrant replacement in $\cat M$ then $\Map(A, B) \simeq \Hom^{\bullet}(QA, \tilde B) \simeq R(\Hom^{\bullet}(-, c-))$, where the right-hand side uses the bifunctor $\Hom^{\bullet}\colon \cat M\op \times \cat M^{\De\op} \to \mathbf{Set}^{\De\op}$ that is defined levelwise.

Moreover, every homotopy category of a model category is tensored and cotensored in $Ho(\sSet)$.
In fact, $\cat M$ can be turned into a simplicial category 
in the sense that there is an enrichment given by the bifunctor $\Map$ and there is a tensor functor as well as a cotensor or power functor, which can be constructed from the simplicial and cosimplicial resolutions. The cotensor is constructed using the simplicial resolution as follows:
Let a simplicial resolution $A_{\bullet} \in \cat M^{\De\op}$ and a simplicial set $K$ be given. Consider $\De K\op$, the opposite of the category of simplices of $K$, with the natural map $v\colon \De K\op \to \De\op$ sending $\De[n] \mapsto K$ to $[n]$.
We define $A_\bullet ^K = \lim_{\De K\op} A_{n}$ to be the image of $A_\bullet$ under $\lim \circ \ v^*\colon \cat C^{\De\op} \to \cat C^{\De K\op} \to \cat C$.
This can also be written as $A^K = \lim_n (\prod_{K_n} A_{n}$).

\subsection{The construction}

Our construction is directly motivated by Simpson's construction of the globalization of a presheaf of dg-categories as a dg-category of Maurer--Cartan elements, cf. section 5.4 of \cite{Simpson05}.

\begin{remark}
In fact, the construction of $\cat C_{n}$ below corresponds to considering the constant presheaf of dg-categories on a covering of $|\De^{n}|$ by $n+1$ open sets (corresponding to leaving out one of the faces).
\end{remark}

\begin{definition}\label{def-construction}
Assume $\cat C$ is fibrant, replace fibrantly otherwise. Then $\cat C_{n}$ is a dg-category with objects given by pairs $(E, \eta)$ where $E$ is a collection $E_0, \dots, E_n \in \ob \cat C$ and $\eta$ is a collection of $\eta_I = \eta(I)\in \uHom_{k-1}(E_{i_0}, E_{i_k})$ for all multi-indices $I = (i_0, \dots, i_k)$ with $1 < k < n$. The case $k = 0$ is subsumed by the differential on $E$. (We interpret $\eta(i) = 0$ where it comes up in computation.) 
These pairs must satisfy the Maurer--Cartan condition: $\de \eta + \eta^2 = 0$, explained below. We also demand that all $\eta_{ij} \in \uHom(E_i, E_j)$ are weak equivalences in $\cat C$.
\end{definition}
\begin{remark}
If we do not fibrantly replace the construction gives a \emph{simplicial framing} on $\dgCat$, see for example \cite{Hirschhorn03}. The simplicial resolution can then be viewed as composing functorial fibrant replacement with the simplicial framing.
\end{remark}

Let us spell out the Maurer--Cartan condition. Intuitively, $\eta$ provides all the comparison maps as well as homotopies between the different compositions. We define the differential 
\[(\de \eta)(i_0, \dots, i_k) \coloneqq d(\eta(i_0, \dots, i_k)) + (-1)^{|\eta|} \sum_{j=1}^{k-1} (-1)^j \eta(i_0, \dots, \widehat{i_j}, \dots, i_k)\]
which lives in $\uHom_{k}(E_{i_0}, E_{i_k})$. We write $\de = d+\De$. Here we define $|\eta| = 1$. 
The product is: 
\[(\phi \circ \eta)(i_0, \dots, i_k) \coloneqq \sum_{j=0}^k  (-1)^{|\phi| j} \phi(i_j, \dots, i_k) \circ \eta(i_0, \dots, i_j)\]
Both definitions follow section 5.2 of \cite{Simpson05}, with some corrections to the signs. We leave out the terms in $\de \eta$ corresponding to leaving out $i_0$ and $i_k$ as they do not live in the correct hom-spaces. 

One can now check that $\De d = - d \De$ (and hence $\de^{2} = 0$) and we have the following Leibniz rule:
\[\de (\phi \circ \eta) = (-1)^{|\eta|}(\de \phi) \circ \eta + \phi \circ (\de \eta)\]
The same equation holds for the summands $d$ and $\De$.
(The unusual sign appears because of the backward notation for compositions.) 

\begin{example} For $n=1$ we have $(\de \eta + \eta^2)_{01} = d(\eta_{01}) + 0$, the expected cycle condition. For $n=2$ we have for example
\[(\de \eta + \eta^2)_{012} = d(\eta_{012}) + \eta_{02} - \eta_{12} \circ \eta_{01} \in \uHom_1(E_0, E_2)\]

So an element of $\cat D_2$ is of the form $(E,\eta)$ where $E = (E_0, E_1, E_2)$ and $\eta =  (\eta_{01}, \eta_{02}, \eta_{12}; \eta_{013})$ satisfies $d\eta + \eta^2 = 0$, which comes out to $d\eta_{ij} = 0$ and $d\eta_{012} =  - \eta_{02} + \eta_{12} \circ \eta_{01}$. This agrees with our intuition that $ \eta_{012}$ is a homotopy from $\eta_{12} \circ \eta_{01}$ to $\eta_{02}$.
\end{example}
Morphisms from $(E, \eta)$ to $(F, \phi)$ are given as follows.
\[\uHom^{-m}_{\cat C_n}((E,\eta), (F,\phi)) = \{a(i_0, \dots, i_k)\}\]
where $a(i_0, \dots, i_k) \in \uHom_{m-k}(E_{i_0}, F_{i_k})$. 
We write $m = |a|$ for the degree of a morphism.
We have a differential $d_{\eta, \phi}$ defined by
\[(d_{\eta, \phi}(a))(i_0, \dots, i_k) = \de(a) + \phi \circ a - (-1)^{|a|} a \circ \eta\]
where composition and differential are defined as above. The Maurer--Cartan condition on $\eta$ and $\phi$ together with the Leibniz rule ensures \mbox{$(d_{\eta, \phi})^2 = 0$}.

\begin{example}\label{eg-dgpath}
For example $\cat C_1$ agrees with the path object in $\dgCat$ as constructed in section 3 of \cite{Tabuada10a}.
Indeed, objects are homotopy invertible morphisms $\eta\colon A \to B$ and morphisms from $\eta$ to $\phi$ are given by triples $(a_0, a_1, a_{01})$ with differential 
\[\de\colon (a_0, a_1, a_{01}) \mapsto (da_0, da_1, da_{01} + \phi \circ a_0 - (-1)^{|a_1|} a_1 \circ \eta)\]
\end{example}

Note that there are induced face and degeneracy maps.
The maps in the simplex category induce restriction functors $\partial_{i}\colon \cat C_n \to \cat C_{n-1}$ and inclusions $\sigma_{i}\colon \cat C_{n} \to \cat C_{n+1}$ that add an extra copy of $E_{i}$, connected by the identity map to $E_{i}$. We then define the maps $\eta$ by pullback, with the extra rule that $(\sigma_j(\eta))_{i_0, …, i_m} = 0$ if $\sigma_{j}(i_k) = 
\sigma_{j}(i_{m})$ for some $k \neq m$.

The replacement map $\iota: c\cat C \to \cat C_n$ is given by $(\sigma_0)^n$ in degree $n$.

Before we embark on the somewhat technical proof that $\cat C_{\bullet}$ is a simplicial resolution, we note the following application. We can extend the definitions of the differentials and composition to functions defined on general simplices. (That is, we replace ``leaving out the $i$-th term'' by the map induced by $\partial_{i}$ etc.)

\begin{proposition}\label{propn-ils-explicit}
Given a simplicial set $K$ we can construct $\cat C^{K}$ as the dg-category with objects $(E, \eta)$ where $E \in (\ob \cat C)^{K_{0}}$ and $\eta$ assigns to every $k$-simplex in $K_{\geq 1}$ a map in $\uHom_{k-1}(E({\partial_{0}^{k}\sigma}), E(\partial_{max}^{k}\sigma))$ satisfying the Maurer--Cartan equations. Hom-spaces are defined similarly to hom-spaces in $\cat C_{\bullet}$.
\end{proposition}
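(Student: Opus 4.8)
The plan is to compute the cotensor $\cat C^{K} = \lim_{\De K\op} v^{*}\cat C_{\bullet}$ of the simplicial resolution directly, and to match the result with the asserted description. First I would recall that limits in $\dgCat$ are formed objectwise in the evident sense: the limit of a diagram is the dg-category whose objects are the compatible families of objects and whose hom-space between two compatible families is the limit of the corresponding hom-spaces. Applied to the diagram $([n],x) \mapsto \cat C_{n}$ indexed by $\De K\op$, an object of $\cat C^{K}$ is thus a rule $P$ assigning to each simplex $x \in K_{n}$ an object $P(x) = (E^{x}, \eta^{x}) \in \cat C_{n}$, subject to $\cat C(\theta)(P(y)) = P(\theta^{*}y)$ for every $\theta \colon [m] \to [n]$ in $\De$ and every $y \in K_{n}$. (As a sanity check, when $K = \De[n]$ the category of simplices has terminal object $\mathrm{id}_{[n]}$, so the limit is the value $\cat C_{n}$ there, recovering $\cat C^{\De[n]} = \cat C_{n}$.)

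Next I would unwind this compatibility. Restricting $P(x)$ along the face $\theta \colon [k] \hookrightarrow [n]$ that picks out a multi-index $I = (i_{0}, \dots, i_{k})$ shows that $\eta^{x}_{I}$ agrees with the top component of the value on the sub-simplex $x|_{I} = \theta^{*}x \in K_{k}$, and that each $E^{x}_{i}$ depends only on the vertex $x|_{i} \in K_{0}$. Hence the family $P$ is equivalently given by a function $E \colon K_{0} \to \ob \cat C$ together with, for each $k$-simplex $\sigma$ of $K$ with $k \geq 1$, an element $\eta(\sigma) \in \uHom_{k-1}(E(\partial_{0}^{k}\sigma), E(\partial_{max}^{k}\sigma))$. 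The degeneracy operators contribute no further data: compatibility with $\sigma_{i} \colon \cat C_{n} \to \cat C_{n+1}$ forces the value on a degenerate simplex $s_{i}\tau$ to be the identity-type normalisation prescribed by $\sigma_{i}$, which is automatically consistent. Finally, the Maurer--Cartan conditions holding in each $\cat C_{n}$ become, under this identification and using the extension of $\de$ and of the product in which ``omit the $i$-th vertex'' is replaced by the face operator $\partial_{i}$ of $K$, a single Maurer--Cartan equation per simplex of $K$; the requirement that the edge components be weak equivalences transports in the same way.

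The hom-spaces are treated identically. A morphism $P \to P'$ is a compatible family $a^{x} \in \uHom_{\cat C_{n}}(P(x), P'(x))$, and the same face and degeneracy analysis reduces it to an assignment $\sigma \mapsto a(\sigma)$ of a map in $\uHom_{|a|-k}(E(\partial_{0}^{k}\sigma), F(\partial_{max}^{k}\sigma))$ to each simplex, with the differential $d_{\eta,\phi}$ and the composition inherited from the limit agreeing with the extended formulas. Since the limit simultaneously computes objects and hom-spaces, this produces the dg-category $\cat C^{K}$ as claimed.

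The main obstacle I anticipate is the combinatorial bookkeeping of the second step rather than any conceptual difficulty: one must check that face-compatibility is exactly equivalent to ``$\eta$ is a well-defined function on the simplices of $K$,'' that degeneracy-compatibility adds nothing beyond the identity-type normalisation, and that the signs in $\de$ and in the product are reproduced correctly once ``omit the $i$-th vertex'' is read as the operator $\partial_{i}$ on $K$. Conceptually the content is simply that the cotensor turns the presentation $K = \colim_{\De K} \De[\bullet]$ into the limit $\lim_{\De K\op}\cat C_{\bullet}$; the care lies in verifying that no compatibility condition is lost or double-counted across degenerate simplices.
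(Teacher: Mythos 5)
Your proposal is correct and follows essentially the same route as the paper, which simply notes that $\cat C^{K} = \lim_{\De K\op} \cat C_{\bullet}$ and that the copies of $\cat C_{n}$ corresponding to degenerate simplices are themselves degenerate; your unwinding of the face-compatibility (reducing each $\eta^{x}_{I}$ to the value on the subsimplex $x|_{I}$) and of the degeneracy normalisation is exactly the bookkeeping the paper leaves implicit. If anything, your write-up is more complete than the published proof, including the sanity check $\cat C^{\De[n]} = \cat C_{n}$ via the terminal object of $\De K$.
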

\begin{proof}
This follows from the  construction of $\cat C^{K} = \lim_{\De K\op} \cat C_{\bullet}$. All the copies of $\cat C_{n}$ corresponding to degenerate simplices are themselves degenerate. 
\end{proof}

\begin{remark}\label{rk-others}
Note that this shows that the construction of Morita cohomology in \cite{Holstein1} as $K \mapsto \cat C^{K}$ corresponds to $\oo$-local systems as defined in \cite{Block09}. \end{remark}
\begin{notation} Given an object or morphism $\al$ and a positive integer $k$ we write $\al_{[k]}$ for the collection of all $\al_{i_0 \dots i_k}$.
\end{notation}
\begin{proposition} \label{propn-mcquasiequiv}
The inclusion from the constant simplicial dg-category $c\cat C$ to $\cat C_{\bullet}$ is a levelwise weak equivalence. 
\end{proposition}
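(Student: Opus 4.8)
The plan is to fix $n$ and verify directly that the replacement map $\iota\colon \cat C \to \cat C_n$ is a quasi-equivalence, by checking separately that it is quasi-fully faithful and quasi-essentially surjective. In both steps the engine is the same column filtration and convergent spectral sequence used in the lemmas of Section~\ref{sect-furtherdg}: I filter a morphism complex by the number of indices $k$, so that the associated graded differential is the internal differential $d$ while the $E^1$-differential is the remaining combinatorial part.

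For quasi-full faithfulness I would compute $\uHom_{\cat C_n}(\iota A,\iota B)$ explicitly. Since $\iota A$ and $\iota B$ carry the \emph{constant} Maurer--Cartan datum (every edge equals $\id$ and every higher term vanishes), the twisted differential $d_{\eta,\phi}(a) = \de(a) + \phi\circ a - (-1)^{|a|}a\circ\eta$ collapses: by the composition formula the terms $\phi\circ a$ and $a\circ\eta$ reduce to ``omit the last index'' and ``omit the first index'', while the $\De$-part of $\de$ omits a middle index, and their signed sum is exactly the simplicial coboundary on the tuples $(i_0,\dots,i_k)$. Thus the morphism complex is identified, up to regrading, with $\uHom_{\cat C}(A,B)$ tensored with the cochain complex $C^{*}(\De^{n})$ of the standard $n$-simplex. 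Filtering by $k$, the $E^1$-page is $H_{*}\uHom_{\cat C}(A,B)$ in each simplicial degree with $E^1$-differential the coboundary of $\De^{n}$; as $\De^{n}$ is contractible this is concentrated in degree $0$, where it is $H_{*}\uHom_{\cat C}(A,B)$. The map induced by $\iota$ is the inclusion of this degree-$0$ summand, hence an isomorphism on $E^{\oo}$, hence a quasi-isomorphism.

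For quasi-essential surjectivity I would show that an arbitrary $(E,\eta)\in\cat C_n$ is isomorphic in $H^{0}(\cat C_n)$ to $\iota(E_0)$. The natural comparison is a morphism $\Theta\colon \iota(E_0)\to(E,\eta)$ assembled from the data based at the vertex $0$, with leading term $\Theta(i)=\eta_{0i}$ and higher components the $\eta(0,i_0,\dots,i_k)$ (up to sign). The point is that the equation $d_{\eta',\eta}\Theta=0$ is precisely the Maurer--Cartan equation for $\eta$: for instance, after unwinding the composition and $\de$ formulas, closedness at a pair $(p,q)$ reads $d(\Theta(p,q)) = \eta_{0q} - \eta_{pq}\circ\eta_{0p}$, which is exactly the Maurer--Cartan relation at $(0,p,q)$ (so that $\Theta(p,q) = -\eta_{0pq}$ serves). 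Finally $\Theta$ is a homotopy equivalence because its leading components $\eta_{0i}$ are weak equivalences in $\cat C$: rerunning the same filtration on the mapping complexes between $(E,\eta)$ and $\iota(E_0)$, the $E^1$-page computes the cohomology of $\De^{n}$ with coefficients in a system that is constant up to homotopy (all edges $\eta_{ij}$ being equivalences), and contractibility of $\De^{n}$ forces $\Theta$ to be invertible in $H^{0}(\cat C_n)$.

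I expect the main obstacle to be this essential-surjectivity step, and inside it the sign and degree bookkeeping needed to pin down the higher components of $\Theta$ and to verify that $d_{\eta',\eta}\Theta=0$ is equivalent to the \emph{full} Maurer--Cartan equation rather than only its low-order shadows, together with the passage from ``$\Theta$ is closed'' to ``$\Theta$ is a homotopy equivalence''. By contrast the quasi-full faithfulness step is in essence the contractibility of the simplex and becomes routine once the twisted differential has been recognised as the simplicial coboundary.
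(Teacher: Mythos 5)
Your fully-faithfulness half is, in substance, the paper's own argument: the paper writes $\uHom_{\cat C_n}(\iota A,\iota B)$ as $\bigl(H[1]\otimes\bigwedge\langle e_0,\dots,e_n\rangle,\ d_H+\iota_{\sum e_i}\bigr)$ with $H=\uHom_{\cat C}(A,B)$, which is exactly your identification ``$H$ tensored with the normalized cochains of $\De^n$'', and then simply observes that this Koszul-type complex is a resolution of $(H,d_H)$; your column spectral sequence proves the same statement with a little more machinery. Where you genuinely diverge is quasi-essential surjectivity. The paper argues in two stages: first an induction on $n$ replacing $(E,\eta)$ by $(E,\phi)$ with all higher homotopies zero, via an explicitly written homotopy equivalence with components $\pm\eta(i_0,\dots,i_{k-1},n-1,n)$ whose closedness is verified by hand against the Maurer--Cartan relation, together with an explicit inverse $H^-$; second, a strict comparison of $(F,(\phi_{[0]},0))$ with the constant object using the composites $\phi(i-1,i)\cdots\phi(0,1)$. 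Your single comparison morphism $\Theta\colon\iota(E_0)\to(E,\eta)$ with components $\pm\eta(0,i_0,\dots,i_k)$ short-circuits that induction, and your closedness check does propagate: with the degenerate-simplex conventions ($\Theta(0)=\id$) the equation $d_{\eta',\eta}\Theta=0$ at $(i_0,\dots,i_k)$ is precisely MC at $(0,i_0,\dots,i_k)$, since the $\Theta\circ\eta'$ sum contributes only the ``omit $i_0$'' term and $\eta\circ\Theta$ reproduces the quadratic terms. The paper's route buys complete explicitness (an actual inverse, no representability-type argument); yours buys brevity and the cleaner conceptual picture of a twisted local system trivialized over the simplex.

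The one step you must tighten is the final inference ``contractibility of $\De^n$ forces $\Theta$ to be invertible in $H^0(\cat C_n)$''. Knowing the cohomology of the two mapping complexes between $\iota(E_0)$ and $(E,\eta)$ does not by itself certify that the particular cycle $\Theta$ is invertible, and contractibility is not the operative input at this point. What does work, inside your own framework: for $X\in\{\iota(E_0),(E,\eta)\}$, post-composition $\Theta\circ -\colon\uHom(X,\iota(E_0))\to\uHom(X,(E,\eta))$ preserves the column filtration (which is finite, $0\le k\le n$, so convergence is automatic), and on the associated graded it acts by post-composition with the leading components $\Theta(i_k)=\eta_{0i_k}$ (and $\id$); these are quasi-isomorphisms precisely because the definition of $\cat C_n$ demands that the edge components of a Maurer--Cartan element be homotopy invertible. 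Hence $\Theta\circ-$ is a quasi-isomorphism for these two test objects, and the standard two-object argument (surjectivity on $H_0\uHom((E,\eta),-)$ produces $g$ with $\Theta g\simeq\id$; injectivity on $H_0\uHom(\iota(E_0),-)$ then gives $g\Theta\simeq\id$) yields a two-sided inverse in $H^0$. So the operative hypothesis for invertibility is edge-invertibility of $\eta$, not contractibility of $\De^n$; contractibility enters only in the fully faithful half. With that step made precise, your argument is a correct and genuinely different proof of the surjectivity half.
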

\begin{proof} We have to check that the inclusion map $\iota\colon c\cat C \to \cat C_n$ is a quasi-equivalence.

Let us first show that $\iota$ induces weak equivalences on hom-complexes.
We have to show that $\uHom_{\cat C^{\De ^n}}((\underline E, \eta), (\underline F, \phi)) \simeq \uHom_{\cat C}(E, F)$ when both $\eta$ and $\phi$ are of the form $(\id, 0)$, i.e.\ the constituent morphisms in degree 0 are the identity and all others are 0. 

Write $(H, d_H) \coloneqq \uHom(E, F)$ and note that from the definitions we can write
\[
\uHom((E, 0), (F, 0)) \simeq (H[1] \otimes \bigwedge \langle e_0, \dots, e_n\rangle, D)
\]
Here the $e_i$ all have degree 1 and we identify $H.e_{i_{0}} \wedge \dots \wedge e_{i_{k}}$ with the $a(i_{0}, \dots, i_{k})$.
The differential $D$ is $d_H + \iota_{\sum e_i}$ where the second term denotes contraction. This complex is a resolution of $(H, d_H)$.

Next we show $\iota$ is quasi-essentially surjective, i.e.\ show that any object $(E, \eta)$ is equivalent to an object $(\underline F_{0}, (\id, 0))$ where $\underline F_{0}$ is of the form $(F_{0}, \dots, F_{0})$.

We can deduce this if we can show that every $(E, \eta)$ is equivalent to some $(F, \phi)$ such that all compositions which agree up to homotopy by $\de \phi+ \phi^2 = 0$ agree strictly, i.e.\ $\phi = (\phi_{[0]}, 0)$, and that any such $(F, (\phi_{[0]}, 0))$ is equivalent to $(\underline F_{0}, (\id, 0))$. 
The second part of this is immediate: We define a map from $(\underline F_0, (\id, 0))$ to $(F, (\phi_{[0]}, 0))$ by sending $F_0$ to $F_i$ via $\phi(0,i) = \phi(i-1, i) \cdots \phi(0,1)$. Since all $\phi({j,j+1})$ are homotopy invertible there is a homotopy inverse. 

We will now show that any $(E, \eta)$ is equivalent to $(F, \phi)$ where $\phi$ has no higher homotopies. Let $F = E$ and let $\phi(i, j) = \eta(j-1, j) \cdots \eta(i, i+1)$.
We may assume by induction on $n$ that all $\eta(i_{0}, \dots i_{k})$ with $i_{k} < n$ are 0. 
Let us now factor the map from $(E, \eta)$ to $(E, \phi)$ as $(E, \eta) \to (E, \theta) \to (E, \phi)$ where $\theta$ is defined like $\phi$ on indices not including n and like $\eta$ otherwise. We first show the first map is a homotopy equivalence. By induction assumption we know this holds for $n-1$. So there is a homotopy equivalence $H’$ between the restrictions of $(E, \eta)$ and $(E, \theta)$ to the index set $0, \cdots, n-1$. We now extend this to homotopy equivalence $H$ by defining $H(n) = \id$ and $H(i_0, \dots, i_{k}, n) = 0$. This still has a homotopy inverse, defined in the same way but starting with the homotopy inverse of $H’$. Moreover $dH = dH’ = 0$.

Now we show the second map is an equivalence as well.
We define the homotopy equivalence $H\colon (E, \eta) \to (E, \phi)$ as follows:
\begin{align*}
H(i) &= \id \\
H(i_{0}, \dots, i_{k}) &= (-1)^{k-1}\eta(i_{0}, \dots, i_{k-1}, n-1, n) & \textrm{ if } i_{k} = n \textrm{ and }i_{n-1} \neq n-1 \\
H(i_{0}, \dots, i_{k}) &=0 & \textrm{ otherwise}
\end{align*}

And define $H^{-}$ to be equal to $H$ in degree $0$ and $-H$ in degree $> 0$. 

Then it is clear that $H$ and $H^{-}$ are inverses. Since $H(i_{0}, \dots, i_{n})$ is zero unless $i_{n} = n$ there are no nontrivial compositions and the compositions $\id \circ H(\dots)$ and $H^{-}(\dots) \circ \id$ cancel in degrees greater than 0.

So it remains to show that $dH = dH^{-} = 0$ to show we have a genuine homotopy equivalence.

We consider $H$ first.

Putting together our definitions we find the following. Let us first assume $i_{k-1} \neq n-1$ and $i_{k} = n$. To obtain the correct signs recall that $|H| = 0$ and $|\eta| = |\phi| = 1$.
\begin{align*}
(dH)(i_{0}, \dots, i_{k}) &= d(H(i_{0}, \dots, i_{k})) + \sum_{j=1}^{k-1} (-1)^{j}H(i_{0}, \dots, \hat i_{j}, \dots, i_{n}) \\
&  + \ \
\sum_{j=0}^{k} (-1)^{j} \phi(i_{j} \dots i_{n}) \circ H(i_{0}, \dots, i_{j})  - \sum_{j=0}^{k} H(i_{j}, \dots, i_{k}) \circ \eta(i_{0}, \dots, i_{j}) \\
\end{align*}
This simplifies to:
\begin{align*}
(dH)(i_{0}, \dots, i_{k})
& = (-1)^{k-1}d\eta(i_{0}, \dots, n-1, n) \\
& \  \ + (-1)^{k-2}\sum_{j} (-1)^{j}\eta(i_{0}, \dots, \hat i_{j}, \dots, n-1, n) \\
& \  \ + 0  
- (-1)^{k-2}\eta(i_{1}, \dots, n-1, n) \circ \eta(i_{0}, i_{1}) - \id \circ \eta(i_{0}, \dots, i_{k})
\\
&= 0
\end{align*}
The last equality holds since the penultimate term is of the form 
\[(-1)^{k-1}(\delta \eta + \eta^{2})(i_{0}, \dots, i_{k-1}, n-1, n)\]
This becomes clear if we write $\eta(i_{0}, \dots, i_{k}) = \eta(i_{0}, \dots, \widehat{n-1}, n)$ and observe that all the other terms we expect in $\delta \eta + \eta^{2}$ are 0.

The other cases are easier. If $i_{k} \neq n$ all terms in the differential are $0$ and if $i_{k-1} = n-1$ and $i_{k} = n$ there are only two nonzero terms, which cancel.

When we consider $dH^{-}$ the sign of the term $\eta(i_{0}, \dots, i_{k})$ changes, as it now comes from $\eta \circ H$ and not $H \circ \eta$. This cancels the effect of the sign of $H(i)$ also changing by a factor of $-1$. There are no other occurrences of the sign of $H(i)$ unless $k=1$ when all but the last two terms are zero and the last two terms cancel. 
\end{proof}

\begin{proposition}\label{prop-reedyfibrant}
The simplicial dg-category $\cat C_{\bullet}$ is Reedy fibrant.
\end{proposition}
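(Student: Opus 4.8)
The plan is to unwind \emph{Reedy fibrant} into the statement that, for every $n$, the matching map
\[
p\colon \cat C_{n} \longrightarrow M_{n}\cat C_{\bullet}
\]
is a fibration in $\dgCat$, and then to identify this map concretely. Using the cotensor description of Proposition \ref{propn-ils-explicit} together with the functoriality of $\cat C^{(-)}$ in the simplicial set, one has $\cat C_{n} = \cat C^{\De^{n}}$ and, applying the same proposition to $K = \partial\De^{n}$, the matching object $M_{n}\cat C_{\bullet} = \cat C^{\partial\De^{n}}$ is the dg-category of Maurer--Cartan data on the proper faces of $\De^{n}$; the matching map is the restriction induced by the boundary inclusion $\partial\De^{n}\hookrightarrow\De^{n}$. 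Concretely, $p$ \emph{forgets the top datum} $\eta_{0\cdots n}$ on objects and the top component $a(0,\dots,n)$ on morphisms, while acting as the identity on all lower-indexed data. The face-matching relations defining $M_{n}$ are exactly the compatibility between the $\eta_{I}$ with $I\subsetneq\{0,\dots,n\}$, so no information beyond the top cell is discarded.

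Next I would recall that a functor is a fibration in the Dwyer--Kan structure precisely when (a) it is degreewise surjective on all hom-complexes, and (b) it is an isofibration on homotopy categories, i.e.\ every homotopy equivalence with source in the image lifts. Condition (a) is immediate: the only summand of $\uHom_{\cat C_{n}}((E,\eta),(F,\phi))$ absent from $\uHom_{M_{n}}$ is the free top component $a(0,\dots,n)$, so $p$ is split surjective already as a map of graded $k$-modules, hence a fibration of complexes. This reduces everything to the isofibration condition.

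For (b), given $(E,\eta)\in\cat C_{n}$ and a homotopy equivalence $\bar v\colon p(E,\eta)\to\bar Y$ in $M_{n}$, I would build the lift by transporting $(E,\eta)$ along the invertible boundary data $\bar v$: take the target to have boundary $\bar Y$ and top cell $\phi_{0\cdots n}$ obtained by conjugating $\eta_{0\cdots n}$ by $\bar v$ (via a chosen homotopy inverse on the relevant vertices, together with the lower correction terms forced by the product), and lift $\bar v$ by specifying the free top component $a(0,\dots,n)$ through the same transport. The Leibniz rule $\de(\phi\circ\eta)=(-1)^{|\eta|}(\de\phi)\circ\eta+\phi\circ(\de\eta)$ together with the Maurer--Cartan equations on the faces should then show that the transported $(F,\phi)$ again satisfies $\de\phi+\phi^{2}=0$ (and that its single-edge entries remain weak equivalences, being composites of equivalences), so that it lies in $\cat C_{n}$, and that $a$ is closed with $p(a)=\bar v$ and homotopy-invertible. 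Equivalently, one can argue by obstruction theory: the obstruction to extending boundary data to a top cell is the class of $(\De\phi+\phi^{2})_{0\cdots n}$ in $H_{n-2}\uHom(F_{0},F_{n})$, this class is invariant under homotopy equivalence, and it vanishes for $p(E,\eta)$ since that object lifts to $(E,\eta)$; hence it vanishes for the equivalent $\bar Y$, giving the required lift of the object, after which $\bar v$ lifts by surjectivity on homs.

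The routine parts are the identification of the matching map with boundary restriction and the hom-surjectivity in (a). The main obstacle is the isofibration step: verifying that the transported top cell genuinely solves the Maurer--Cartan equation and that the lifted morphism is homotopy-invertible using only the homotopy-invertibility of $\bar v$ rather than a strict inverse. This is precisely where the signs in the differential $\de$ and the product, and the Leibniz rule relating them, have to be tracked carefully, exactly as in the verification of $(d_{\eta,\phi})^{2}=0$ earlier.
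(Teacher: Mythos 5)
Your setup agrees with the paper's: you identify the matching map as restriction along $\partial\De^{n}\hookrightarrow\De^{n}$ (forgetting the top datum $\eta_{0\cdots n}$ on objects and $a(0,\dots,n)$ on morphisms, with $M_{n}\cat C_{\bullet}=\cat C^{\partial\De^{n}}$ via Proposition \ref{propn-ils-explicit}), you note that surjectivity on hom-complexes is split and immediate, and you locate the obstruction to lifting an object in the cycle $(\De\phi+\phi^{2})_{0\cdots n}$ of degree $n-2$. All of this matches the paper. But the two steps you yourself flag as the ``main obstacle'' are exactly where your argument has no content, and they constitute essentially the whole of the paper's proof. First, the transport/conjugation route cannot work as stated: $\bar v$ has only a homotopy inverse $g$, so any conjugated top cell satisfies Maurer--Cartan only up to terms involving $g\circ\bar v-\id$, and your fallback assertion that the obstruction class ``is invariant under homotopy equivalence'' is precisely what must be proved --- the two classes live in different complexes, $\uHom(E_{0},E_{n})$ versus $\uHom(F_{0},F_{n})$, and there is no map inducing an isomorphism between them, only the non-invertible $h$ and $g$. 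The paper establishes exactness of $(\De\phi+\phi^{2})_{[n]}$ by an explicit chain of homotopies: it introduces a truncated product $\circ'$, shows $g(n)\cdot(\De\phi+\phi^{2})\simeq 0$ using the Leibniz rule and the Maurer--Cartan equations in degrees below $n$, proves separately that $d(\De\phi+\phi^{2})=0$ (using $d\phi=-\De\phi-\phi^{2}$ in lower degrees), and then passes from $h(n)g(n)\cdot(\De\phi+\phi^{2})\simeq 0$ to $(\De\phi+\phi^{2})\simeq 0$ via a homotopy $K$ with $dK=h(n)g(n)-\id$. This is not routine sign-tracking to be deferred; it is the core computation, and your sketch gives no indication of how it would go.

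Second, after lifting the object you propose to lift $\bar v$ ``by surjectivity on homs,'' but the fibration condition requires the lift to be a homotopy equivalence in $\cat C_{n}$, and a closed lift of a homotopy equivalence need not be one --- nothing in your sketch addresses this. The paper supplies the mechanism: since $\cat C$ is fibrant it has cones, a map is a homotopy equivalence if and only if its cone is contractible, so the problem reduces to lifting contractions; given a contraction $s_{<n}$ of $cone(h)=(G,\gamma_{<n})$ with $d_{\gamma}(s_{<n})=(\id,0,\dots,0,t_{[n]})$, one checks $dt_{[n]}=0$ and corrects by the top component $s_{[0]}t_{[n]}$ to obtain a genuine contraction $(s_{[0]},\dots,s_{[n-1]},s_{[0]}t_{[n]})$ upstairs. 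Note that this is also the only place where the standing fibrancy assumption on $\cat C$ (needed for the existence of cones) enters, an assumption your proposal never invokes. So while your outline points at the correct obstruction-theoretic skeleton, both load-bearing verifications --- exactness of the obstruction cycle through a mere homotopy equivalence, and lifting of equivalences as equivalences --- are missing, and the first of your two proposed routes (conjugation) would fail outright without the correction machinery you leave unspecified.
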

\begin{proof}
Write 
\[\eta_{< n} \coloneqq (\eta_0, \dots, \widehat{\eta_{0 \dots n}}) = (\eta_{[0]}, \dots, \eta_{[n-1]})\]
Then $M_{n}(\cat C)$ is a subcategory of $\cat C_n$ whose objects are of the form $(E, \eta_{<n})$. In particular note that the Maurer--Cartan condition holds on all indexing sets except on $(0, \dots, n)$.
Similarly, morphisms are of the form $s_{<n}$ where $s$ is a morphism in $\cat C_n$.
This is easily seen to be the correct limit, see Proposition \ref{propn-ils-explicit}. We write $\pi \colon \cat C_{n} \to M_{n}\cat C$ for the functor forgetting $\eta_{[n]}$.

It is immediate from the definition that there is a surjection on hom-spaces. So it remains to check the lifting property for homotopy invertible maps. 
We will first reduce to lifting contractions, as is done in the case of path objects in section 3 of \cite{Tabuada10a}. 

Note that by assumption the dg-category $\cat C$ is fibrant and hence has cones,\footnote{This is incorrect, see the appendix.}
 cf. section 2 of \cite{Tabuada10a}.
 Then to see if a map $h$ is homotopy invertible it suffices to check that $cone(h)$ is contractible. 

So assume $h\colon (E, \eta_{<n}) \to (F, \phi_{<n})$ is homotopy invertible in $M_{n}\cat C$ with homotopy inverse $g$ and that $(E, \eta_{<n})$ is in the image of $\cat C_{n}$ under $\pi$. First we need to check that $(F, \phi_{<n})$ is also in the image of $\cat C_{n}$.
It is enough to find $\phi_{[n]}$  such that $\de\phi + \phi^{2} = 0$ while we know that $\de \phi_{<n} + \phi_{<n}^{2} = 0$. 
In other words we are looking for $\phi_{[n]}$ such that 
$d\phi_{[n]} = (\De \phi + \phi^2)_{[n]}$.

We will first consider $g(n) \cdot (\De \phi + \phi^2)(0 \dots n)$. Define $\rho \circ' \sigma$ to be $\rho \circ \sigma$ minus the term $\rho(n)\cdot \sigma(\cdots n)$. Then $- \circ' \sigma = - \circ \sigma$ if $\sigma$ is $\eta$ or $\phi$. Note that $d$ and $\De$ are compatible with $\circ'$ just as with the usual product.

Then $g(n) \cdot \phi(i \dots n) = (- g \circ' \phi + \eta \circ g - \de g)(i \dots n)$ and we can perform the following computation, where we deduce the Maurer--Cartan condition in degree $n$ from the Maurer--Cartan conditions in lower degrees.  
\begin{align*}
g(n)\cdot (\De \phi + \phi \circ \phi) 
&= - \De (g \circ' \phi) + \De (\eta \circ g) - \De \de g \\
&  \ \ + (- g \circ' \phi + \eta \circ g - \de g) \circ \phi \\
&= - g \circ' (\De \phi + \phi \circ \phi) + \eta \circ (\De g + g \circ \phi)  \\
&  \ \ - d g \circ' \phi + \De \eta \circ g + d \De g \\
&\simeq g \circ' d\phi - dg \circ' \phi + \eta  \circ \eta \circ g- \eta \circ dg  +  \De \eta \circ g\\
&= d(g \circ' \phi) - d\eta \circ g - \eta \circ dg  \\ 
&\simeq - d(\eta \circ g) \\
&\simeq 0
\end{align*}

Since $dh(n) = 0$ we deduce that $h(n)g(n)(\De \phi + \phi^{2}) \simeq 0$ and it suffices to show $(h(n)g(n) -
\id) \cdot (\De \phi + \phi^{2}) \simeq 0$. We know there exists $K$ with $dK = h(n)g(n) - \id$ so the desired homotopy follows if we can show that $d(\De\phi + \phi^{2}) = 0$.
One may check explicitly that $d(\De \phi) =  - \De \phi \circ \phi + \phi \circ \De \phi$, using the fact that $d\phi = - \De \phi - \phi^{2}$ in degree less than $n$. 
Then we can use Maurer--Cartan in lower degrees again to deduce:
\begin{align*}
d(\De \phi + \phi^{2}) &= d(\De \phi)- (-\De \phi - \phi^{2}) \circ \phi + \phi \circ (- \De \phi - \phi^{2})\\
&= 0
\end{align*}

Thus we know the domain and codomain of $h$ are in the image of $\pi$ and we can use surjectivity of hom-spaces to write $h = \pi(H)$. Now it suffices to show that the contraction of $h$ lifts. 

Let us assume we are given a contraction $s_{< n}$ of $cone(h) = (G, \gamma_{< n})$,
we have to find a contraction $s$ of $(G, \gamma)$.\footnote{As $H$ need not be closed this need not exist, see the appendix.} By assumption we can write $d_\gamma(s_{<n}) = (\id, 0, \dots, 0, t_{[n]})$ for some $t_{[n]}$. Now consider $0 = d_\gamma d_\gamma(s_{<n}) = (0, \dots, 0, dt_{[n]} + 0)$. This forces $\de t_{[n]} = d t_{[n]}= 0$. But now we know that $d s_{[0]} = \id$ and hence $d\colon s_{[0]} t_{[n]} \mapsto t_{[n]}$ and $(s_{[0]}, \dots, s_{[n-1]}, s_{[0]} t_{[n]})$ is a contraction of $(G, \gamma)$.

We deduce that $H$ is contractible and the preimages of $(E, \eta_{<n})$ and $(F, \phi_{<n})$ are indeed homotopy equivalent.
\end{proof}

\appendix
\section{Errata}

As pointed out to me by Daria Poliakova there are gaps in the proof that  $\cat C_\bullet$ is indeed a simplicial resolution of a dg category $\cat C$.

To discuss this, we need to distinguish between the Dwyer-Kan and Morita model structures.
Let us first fix the Dwyer-Kan model structure on $\dgCat_k$ und fix a dg category $\cat C$.

 The argument given in the body of this paper has two errors:
 Firstly, we assume $\cat C$ has cones in Proposition \ref{prop-reedyfibrant}, which follows if $\cat C$ is Morita fibrant, but we are working in the DK model structure and may not assume this.
 One can work in the pretriangulated hull of $\cat C$ to overcome this.
More seriously, in the proof of the same proposition we need to show we can lift a homotopy invertible map $h$. To do this we consider the cone of a lift $H$ with $\pi(H)=h$. This approach falls short as $H$ as constructed is not necessarily closed. 
(It seems that this issue also occurs when the path object for dg-categories is considered in Proposition 3.3 of \cite{Tabuada10a}, cf.\ also \cite{Shoikhet18}.)

More careful explicit proofs are given by Arhkipov and Poliakova, see Theorem 3.36 in \cite{Arkhipov18}, which shows that our construction in Definition \ref{def-construction} gives simplicial resolutions in the Dwyer Kan model structure.

Next we consider the case of the Morita model structure. The issue in this case is that it is a priori harder to check that the maps $\cat C_n \to M_n \cat C$ are Morita fibrations.

We note that the Morita and Dwyer-Kan model structures on $\dgCat_k$ induce two different Reedy model structures on simplicial dg categories.
The construction in Definition \ref{def-construction} applied to Morita fibrant (i.e.\ pre-triangulated) dg category provides a level-wise Morita fibrant dg category $\cat C_\bullet$ that is weakly equivalent to $c\cat C$ in either Reedy model structure.
It is moreover Dwyer-Kan Reedy fibrant using our main results with corrections from \cite{Arkhipov18}. It remains to show that $\cat C_\bullet$ is also Morita Reedy fibrant. But that is entirely formal.

In fact, the following argument applies to any Bousfield localization of model categories.

\begin{lemma}
	Let $X_\bullet$ be a simplicial dg category that is Dwyer-Kan Reedy fibrant and level-wise Morita fibrant. Then $X_\bullet$ is Morita Reedy fibrant.
\end{lemma}

\begin{proof}
We take $A_\bullet \to B_\bullet$ a Morita Reedy acyclic cofibration and consider a diagram
$$\xymatrix{
		A_\bullet\ar[r]\ar[d]_i & X_\bullet \ar[d] \\
	B_\bullet \ar[r]& 0
}$$
Taking level-wise pretriangulated envelopes gives a levelwise Morita fibrant replacement $RA_\bullet \to RB_\bullet$. As all objects are now Morita fibrant this is a level-wise weak equivalence in either model structure. 

As $X_\bullet$ is levelwise Morita fibrant our diagram factors through $RA_\bullet \to RB_\bullet$ as the pretriangulated envelope is left adjoint to inclusion.

Now we may factor $RA_\bullet \to RB_\bullet$ as an acyclic cofibration followed by an acyclic fibration in the the Morita Reedy model category, write this $RA_\bullet \to Y_\bullet \to RB_\bullet$. Note $Y_\bullet$ is levelwise Morita fibrant, so $j: RA_\bullet \to Y_\bullet$ is a level-wise weak equivalence in the Dwyer-Kan model structure as well, and thus $j$ is an
acyclic cofibration in the Dwyer-Kan Reedy model structure.

$$\xymatrix{
	A_\bullet\ar[r]\ar[dd]_i & RA_\bullet \ar[r]\ar[d]_j & X_\bullet \ar[dd] \\
	 & Y_\bullet \ar[d]_p \ar@{-->}[ur] & \\
	B_\bullet \ar[r]\ar@{-->}[ur] & RB_\bullet \ar[r] & 0
}$$

To show $X_\bullet$ is Reedy Morita fibrant we need to find a lift $B_\bullet \to X_\bullet$ in our original diagram.  We first find the lift $B_\bullet \to Y_\bullet$ as $i$ is an acyclic cofibration and $p$ is a fibration in the Morita Reedy model category. 

Then we find the lift $Y_\bullet \to X_\bullet$ as $j$ is an acyclic cofibration and $X_\bullet \to 0$ a fibration in the Dwyer-Kan Reedy model structure.
\end{proof}

\bibliography{../biblibrary}

\end{document}